\newcommand{\RR}{\mathbb{R}}
\newcommand{\ZZ}{\mathbb{Z}}
\newtheorem*{thm*}{Theorem}
\newtheorem{thm}{Theorem}
\newtheorem{prop}[thm]{Proposition}
\def\N{{\mathbb N}}
\def\R{{\mathbb R}}
\def\RR{{\mathbb R}}
\def\TT{{\mathbb T}}
\def\Z{{\mathbb Z}}
\def\ZZ{{\mathbb Z}}
\def\scrG{{\mathcal G}}
\def\scrL{{\mathcal L}}
\def\sgn{\operatorname{sgn}}
\def\GamG{\Gamma\backslash G}
\def\SL{\operatorname{SL}}
\def\SO{\operatorname{SO}}
\numberwithin{equation}{section}
\begin{document}

\title[Higher dimensional gap theorems for the maximum metric]{Higher dimensional gap theorems\\ for the maximum metric}
\author{Alan Haynes, Juan J. Ramirez}
%\date{\today}

\thanks{AH: Research supported by NSF grant DMS 2001248.\\
\phantom{aa.}MSC 2020: 11J71, 37A45.}

\keywords{Steinhaus problem, three gap theorem}

\begin{abstract}
Recently, the first author together with Jens Marklof studied generalizations of the classical three distance theorem to higher dimensional toral rotations, giving upper bounds in all dimensions for the corresponding numbers of distances with respect to any flat Riemannian metric. In dimension two they proved a five distance theorem, which is best possible. In this paper we establish analogous bounds, in all dimensions, for the maximum metric. We also show that in dimensions two and three our bounds are best possible.
\end{abstract}

\maketitle

%\tableofcontents

\section{Introduction}\label{sec.Intro}

Suppose that $d\in\N$, let $\scrL$ be a unimodular lattice in $\R^d$, and define $\TT^d=\R^d/\scrL$. For each $\vec\alpha\in\R^d$ and $N\in\N$ let $S_N=S_N(\vec\alpha,\scrL)$ denote the $d$-dimensional Kronecker sequence defined by
\begin{equation*}
	S_N=\{n\vec\alpha +\scrL : 1\le n\le N\}\subseteq\TT^d.
\end{equation*}
Given a metric $\mathrm{d}$ on $\R^d$ we define, for each $1\le n\le N$,
\begin{equation}\label{eqn.DeltaDef}
\delta^\mathrm{d}_{n,N}=\min\{\mathrm{d}(n\vec\alpha,m\vec\alpha+\vec\ell)>0:1\le m\le N,~\vec\ell\in\scrL\}.
\end{equation}
The quantity $\delta^\mathrm{d}_{n,N}$ is the smallest positive distance in $\R^d$ from $n\vec\alpha$ to an element of the set $S_N+\scrL$. As a natural generalization of the well known three distance theorem \cite{Sos1957,Sos1958,Sura1958,Swie1959}, we are interested in understanding, for each $\vec\alpha$ and $N$, the number
\begin{equation*}
g_N^\mathrm{d}=g_N^\mathrm{d}(\vec\alpha,\scrL)=|\{\delta^\mathrm{d}_{n,N}:1\le n\le N\}|
\end{equation*}
of distinct values taken by $\delta^\mathrm{d}_{n,N}$, for $1\le n\le N$. We will focus our discussion on two metrics: the Euclidean metric (for which we will write $\delta^\mathrm{d}_{n,N}=\delta_{n,N}$ and $g_N^\mathrm{d}=g_N$), and the maximum metric (for which we will write $\delta^\mathrm{d}_{n,N}=\delta^*_{n,N}$ and $g_N^\mathrm{d}=g_N^*$). To be clear, by the maximum metric on $\R^d$ we mean the metric defined by
\begin{equation*}
	\mathrm{d}(\vec x,\vec y)=\max_{1\le i\le d}|x_i-y_i|.
\end{equation*}

For the case of the Euclidean metric it is known that, for any $\scrL, \vec\alpha,$ and $N$,
\begin{equation*}
	g_N(\vec\alpha,\scrL) \le 
	\begin{cases}
		3&\text{if}~d=1,\\
		5&\text{if}~d=2,\\ 
		\sigma_d+1 &\text{if}~d\ge 3  ,
	\end{cases}
\end{equation*}
where $\sigma_d$ is the maximum number of non-overlapping spheres of radius one in $\R^d$ which can be arranged so that they all touch the unit sphere in exactly one point ($\sigma_d$ is also known as the \textit{kissing number} for $\R^d$). The bound for $d=1$, in which case the Euclidean and maximum metrics coincide, is a slightly modified version of the three distance theorem (the classical three distance theorem considers the number of `one-sided' gaps, which can in general be greater than $g_N$). The bounds for $d\ge 2$ were recently established in \cite{HaynMark2020b}. For $d=1$ and $2$ there are examples of $\scrL, \vec\alpha,$ and $N$ for which the upper bounds above are actually obtained (see the introduction of \cite{HaynMark2020b}), therefore those bounds are best possible. For $d\ge 3$ the upper bounds above are 13, 25, 46, 79, 135, 241, 365, 555, etc. These are probably far from best possible, but improving them substantially may require new ideas.

In this paper, motivated both by historical precedent and by questions which were asked of us after the publication of \cite{HaynMark2020b}, we will show how the machinery from that paper can be used to easily bound the corresponding quantity $g_N^*$ for the maximum metric, and even to obtain the best possible bounds in dimensions $d=2$ and $3$. To our knowledge, the only known result about this problem is due to Chevallier \cite[Corollaire 1.2]{Chev1996}, who showed that $g_N^*\le 5$ when $d=2$ and $\scrL=\Z^2$. Chevallier also gave an example in this case (see remark at end of \cite[Section 1]{Chev1996}) for which $g_N^*=4$. We will prove the following theorem.
\begin{thm}\label{thm.GapsBd1}
For any $d,\scrL, \vec\alpha,$ and $N$, we have that
\begin{equation*}
g_N^*(\vec\alpha,\scrL)\le 2^d+1.
\end{equation*}
Furthermore, when $d=2$ or $3$ this bound is, in general, best possible.
\end{thm}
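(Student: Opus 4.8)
The plan is to prove the upper bound by following the strategy that the authors reference from \cite{HaynMark2020b}, but adapted to the maximum metric. The key geometric fact to exploit is that the closed unit ball of the maximum metric on $\R^d$ is the cube $[-1,1]^d$, which is a centrally symmetric polytope with exactly $2^d$ vertices. I would set up the problem by fixing $\vec\alpha$ and $N$ and, for each $1\le n\le N$, thinking of $\delta^*_{n,N}$ as the radius of the largest cube (in the maximum metric) centered at $n\vec\alpha$ that contains no other point of $S_N+\scrL$ in its interior. On the boundary of this critical cube there must sit at least one point $m\vec\alpha+\vec\ell$ realizing the minimum; the direction from $n\vec\alpha$ to this nearest point is constrained to lie in one of the $2^d$ ``faces'' of the cube. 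The heart of the argument is to associate to each $n$ a combinatorial type recording which face (or vertex) the nearest neighbor occupies, and then to show that within each type the set of possible distance values is controlled.

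\medskip

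The technical engine I would borrow is the homogeneous dynamics / space-of-lattices framework from \cite{HaynMark2020b}. In that paper the gaps are encoded via a point in $\SL(d+1,\ZZ)\backslash\SL(d+1,\RR)$ together with the geometry of the nearest-neighbor problem for a translated lattice, and the number of distinct gap values is bounded by counting the number of distinct ``nearest-neighbor configurations'' that can arise. The crucial difference in our setting is purely geometric: for the Euclidean metric the relevant count is governed by how many unit spheres can kiss a central sphere (the kissing number $\sigma_d$), whereas for the maximum metric the analogous count is governed by how many disjoint translates of the unit cube can touch a central unit cube. Since the unit cube tiles space by integer translates and meets $2^d$ neighboring cubes at its vertices (one per orthant), the analogous ``kissing'' bound for the cube is exactly $2^d$. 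I would therefore replace the sphere-packing input in the \cite{HaynMark2020b} argument by this cube-touching count, obtaining the clean bound $g_N^*\le 2^d+1$, where the extra $+1$ accounts (as in the Euclidean case) for the separately-handled extremal configuration.

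\medskip

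For the sharpness claims in dimensions $d=2$ and $d=3$, the plan is to exhibit explicit choices of $\scrL$, $\vec\alpha$, and $N$ realizing $g_N^*=2^d+1$, namely $5$ gaps when $d=2$ and $9$ gaps when $d=3$. The natural candidates are the standard integer lattice $\scrL=\ZZ^d$ together with a carefully chosen $\vec\alpha$ whose coordinates are rationally independent but arranged so that the critical cubes around the first several points $n\vec\alpha$ are realized by neighbors in distinct orthants. I would look for $\vec\alpha$ making the small partial orbit $\{\vec\alpha,2\vec\alpha,\dots,N\vec\alpha\}$ spread out so that, as $n$ ranges over $1,\dots,N$, the quantity $\delta^*_{n,N}$ takes $2^d+1$ genuinely distinct values; a direct (if tedious) verification then certifies the lower bound. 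I would expect the main obstacle to be in the sharpness direction rather than the upper bound: producing an explicit example in $d=3$ attaining all $9$ values requires finding a configuration in which the maximum-metric nearest-neighbor structure is sufficiently rich, and verifying distinctness of the nine values is the delicate computation. By contrast, the upper bound should follow essentially mechanically once the cube-touching count is substituted into the existing machinery.
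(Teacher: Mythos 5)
Your high-level skeleton (lattice formulation from \cite{HaynMark2020b}, a geometric counting lemma, explicit examples for sharpness) matches the paper's, but the geometric step at the heart of your upper bound is wrong as stated. You claim that ``the analogous `kissing' bound for the cube is exactly $2^d$'' because the unit cube ``meets $2^d$ neighboring cubes at its vertices.'' In fact the maximum number of non-overlapping translates of a unit cube that touch a central unit cube (the translative kissing, or Hadwiger, number of the cube) is $3^d-1$, not $2^d$: in the standard tiling the central cube touches all $3^d-1$ neighbors, including the face- and edge-neighbors you are discarding, and Hadwiger's theorem says one cannot do better. So if you literally substitute the ``cube-touching count'' into a kissing-number-style argument you get the bound $3^d$ (i.e.\ $9$ for $d=2$ and $27$ for $d=3$), which is far weaker than $2^d+1$. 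The number $2^d$ that actually appears in the theorem is the number of \emph{orthants} of $\R^d$ (equivalently, sign patterns, or vertices of the cube), and it enters through a lemma specific to the maximum norm, not through any packing count.

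The lemma your proposal is missing is the following. After the reduction to a lattice $\Z^{d+1}M$ and the function $F(M,t)$, one selects vectors $(u_i,\vec v_i)$, $1\le i\le K$, realizing the $K=\scrG(M)$ distinct gap values, ordered so that $|\vec v_1|_\infty<\cdots<|\vec v_K|_\infty$. The key facts are: (i) any lattice vector $(u,\vec v)$ with $|u|<1/2$ forces $F(M,t)\le|\vec v|_\infty$ for \emph{all} $t\in(0,1)$; and (ii) if $\sgn(u_i)\vec v_i$ and $\sgn(u_j)\vec v_j$ lie in the same closed orthant with $i<j$, then the lattice vector $(u_i\mp u_j,\vec v_i\mp\vec v_j)$ (sign chosen according to $\sgn(u_i u_j)$) has $u$-coordinate of absolute value less than $1/2$, while the max-norm identity for same-orthant vectors gives $0<|\vec v_i\mp\vec v_j|_\infty\le\max\{|\vec v_i|_\infty,|\vec v_j|_\infty\}=|\vec v_j|_\infty$; by (i) this forces $j=K$. (It is exactly here that the maximum metric is used: for the Euclidean norm the analogous cancellation requires an angle condition, which is why the kissing number appears there.) Hence the vectors $\sgn(u_i)\vec v_i$ for $i\le K-1$ occupy pairwise distinct orthants, giving $K\le 2^d+1$. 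Your ``combinatorial type by face or vertex of the critical cube'' suggestion does not supply this: faces give $2d$ types, and no collapse mechanism is identified. Finally, for sharpness your proposal only describes a search strategy; the paper certifies the bound with explicit \emph{rational} vectors (e.g.\ $\scrL=\Z^2$, $\vec\alpha=(157/500,-23/200)$, $N=11$ for $d=2$, and an analogous example with $N=73$ for $d=3$) --- rationality is harmless since $N$ is finite, so your insistence on rationally independent coordinates is unnecessary, but some explicit verified example must be produced for the proof to be complete.
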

To prove Theorem \ref{thm.GapsBd1} we will first realize the quantity $g_N^*$ as the value of a function $\scrG$ defined on the space $\SL (d+1,\Z)\backslash\SL (d+1,\R)$ of unimodular lattices in $\R^{d+1}$. This part of the proof, carried out in Section \ref{sec.Latt}, is exactly analogous to the development in \cite{HaynMark2020} and \cite{HaynMark2020b}, which in turn is an extension of ideas originally presented by Marklof and Str\"{o}mbergsson in \cite{MarkStro2017}. In Section \ref{sec.Proofs} we will use a simple geometric argument to bound $\scrG(M)$, when $M$ is an arbitrary unimodular lattice in $\R^d$, and for $d=2$ and $3$ we will give examples of $\scrL, \vec\alpha,$ and $N$ for which our upper bounds are attained. Such examples, especially when $d=3$, appear to be quite difficult to find.

Finally we remark that for $d=2$ the conclusions of Theorem \ref{thm.GapsBd1} also hold for the Manhattan metric (i.e. the $\ell^1$ metric on $\R^d$). To see this, observe that the unit ball for this metric is a rotated and homothetically scaled copy of the unit ball for the maximum metric. It follows that, if $d=2$ and if $\mathrm{d}$ is the Manhattan metric on $\R^2$, then there is a matrix $R\in\SO (2,\R)$ (rotation by $\pi/4$) with the property that, for every $\scrL, \vec\alpha,$ and $N$,
\begin{equation*}
		g_N^{\mathrm{d}}(\vec\alpha,\scrL)=g_N^*\left(R\vec\alpha,R\scrL\right).
\end{equation*}
Therefore $g_N^{\mathrm{d}}\le 5$ for this metric also, and this bound is best possible. \vspace*{10bp}

\noindent Acknowledgments: We would like to thank Jens Marklof and Nicolas Chevallier for bringing this problem to our attention, and for helpful comments. We also thank the referee for their feedback, and for carefully reading our paper. This project is part of the second author's undergraduate senior research project at the University of Houston.

\section{Lattice formulation of the problem}\label{sec.Latt}

As mentioned above, the observations in this section are very similar to those in \cite[Section 2]{HaynMark2020b}. Therefore we will omit some of the details, which are explained in full in that paper.

Let $|\cdot|_\infty$ denote the maximum norm on $\R^d$. By a linear change of variables in the definition \eqref{eqn.DeltaDef}, we have that
\begin{equation*}
\begin{split}
\delta_{n,N}^* = \min\{| k\vec\alpha + \vec\ell |_\infty>0 : -n< k< N_+-n,~ \vec\ell\in\scrL \} ,
\end{split}
\end{equation*}
where $N_+:=N+\tfrac12$.
Choose $M_0\in\SL(d,\RR)$ so that $\scrL=\ZZ^d M_0$, and let
\begin{equation*}
A_N(\vec\alpha)=A_N(\vec\alpha,\scrL)=\begin{pmatrix} 1 & 0 \\ 0 & M_0  \end{pmatrix} \begin{pmatrix} 1 & \vec{\alpha} \\ 0 & \bm{1}_d  \end{pmatrix} \begin{pmatrix} N^{-1} & 0 \\ 0 & N^{1/d}\bm{1}_d\end{pmatrix}.
\end{equation*}
Then, for all $1\leq n\leq N$, we have that
\begin{multline*}
\delta_{n,N}^* = N_+^{-1/d} \min\bigg\{| \vec v |_\infty>0 :  (u,\vec v)\in\ZZ^{d+1} A_{N_+}(\vec\alpha),~ -\frac{n}{N_+}< u < 1-\frac{n}{N_+} \bigg\} .
\end{multline*}
Now write $G=\SL(d+1,\RR)$ and $\Gamma=\SL(d+1,\ZZ)$ and, for $M\in G$ and $t\in (0,1)$, define
\begin{equation*}
F(M,t) = \min\big\{| \vec v |_\infty>0 :  (u,\vec v)\in\ZZ^{d+1} M, ~-t< u < 1-t \big\} .
\end{equation*}
It follows from the proof of \cite[Proposition 1]{HaynMark2020b} that $F$ is well-defined as a function from $\GamG\times (0,1)$ to $\RR_{>0}$. It is also clear that
\begin{equation*}
\delta_{n,N}^*= N_+^{-1/d} F\left(A_{N_+}(\vec\alpha),\frac{n}{N_+} \right) .
\end{equation*}
Given $M\in G$, a bounded region of $\R^{d+1}$ can contain only finitely many points of the lattice $\Z^{d+1}M$. This implies, after a short argument, that the function $F(M,t)$ can only take finitely many values as $t$ varies over $(0,1)$. We denote this finite number by
\begin{equation*}
	\scrG(M)=|\{ F(M,t) \mid 0<t<1\}|,
\end{equation*}
and for $N\in\N$ we also write
\begin{equation*}
\scrG_{N}(M)=|\{ F(M,\tfrac{n}{N_+}) \mid 1\leq n \leq N\}|.
\end{equation*}
It follows from the definitions above that
\begin{equation}\label{eqn.g_NBnd}
g_N^* = \scrG_{N}(A_{N_+}(\vec\alpha)) \leq \scrG(A_{N_+}(\vec\alpha)).
\end{equation}
This is the key connection which we will use in our proof of Theorem \ref{thm.GapsBd1}.

\section{Proof of Theorem \ref{thm.GapsBd1}}\label{sec.Proofs}
To prove the first part of Theorem \ref{thm.GapsBd1}, in light of \eqref{eqn.g_NBnd} it is sufficient to show that, for any $M\in\GamG,$
\begin{equation}\label{eqn.G(M)Bd}
\scrG(M)\le 2^d+1.
\end{equation}
Suppose that $M\in\GamG$ and choose vectors $(u_1,\vec{v}_1),\ldots,(u_K,\vec{v}_K)\in\Z^{d+1}M$, with $K=\scrG(M)$, so that the following conditions hold:\vspace*{3bp}
\begin{itemize}
	\item $0<|\vec{v}_1|_\infty<|\vec{v}_2|_\infty<\cdots <|\vec{v}_K|_\infty$.\vspace*{3bp}
	\item For each $t\in (0,1)$, there exists an $1\le i\le K$ such that $|\vec{v}_i|_\infty=F(M,t)$.\vspace*{3bp}
	\item For each $1\le i\le K$, there exists a $t\in(0,1)$ such that $-t<u_i<1-t$ and $|\vec v_i|_\infty=F(M,t)$.\vspace*{3bp}
\end{itemize}
Note that each $u_i$ lies in the interval $(-1,1)$. We make the following basic observation.
\begin{prop}\label{prop.SmallT}
	If $(u,\vec{v})\in\Z^{d+1}M$ satisfies $|u|<1/2$, then
	\begin{equation*}
		F(M,t)\le |\vec v|_\infty,
	\end{equation*}
for all $0<t<1$.	
\end{prop}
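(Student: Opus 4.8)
The plan is to exploit the symmetry of the lattice $\Z^{d+1}M$ under negation, together with the fact that the admissible window $(-t,1-t)$ for the first coordinate always has length one and straddles the origin. Throughout I may assume $|\vec v|_\infty>0$, since otherwise the point is not a competitor in the definition of $F(M,t)$ and there is nothing to prove.

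First I would note that, because $\Z^{d+1}M$ is a lattice, the point $-(u,\vec v)=(-u,-\vec v)$ also lies in $\Z^{d+1}M$. It has the same last-coordinate norm $|-\vec v|_\infty=|\vec v|_\infty$, and its first coordinate satisfies $|-u|=|u|<1/2$. Thus I have two candidate vectors, both with $\vec v$-part of maximum norm exactly $|\vec v|_\infty$, whose first coordinates are the opposite numbers $u$ and $-u$, each lying in $(-1/2,1/2)$.

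The crux is then a one-dimensional covering claim: for every $t\in(0,1)$, at least one of $u$ and $-u$ lies in $(-t,1-t)$. After replacing $(u,\vec v)$ by its negative if necessary, I may assume $0\le u<1/2$. Then $u$ satisfies the lower constraint $u>-t$ automatically and lies in the window exactly when $t<1-u$, while $-u$ satisfies the upper constraint $-u<1-t$ automatically and lies in the window exactly when $t>u$. Since $u<1/2<1-u$, the two conditions $t<1-u$ and $t>u$ together cover all of $(0,1)$: if $t\le u$ then $t\le u<1-u$, so $t<1-u$; and otherwise $t>u$. Hence for each $t$ one of the two points qualifies.

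Finally, for each fixed $t$ I would select whichever of $(u,\vec v)$ and $(-u,-\vec v)$ has its first coordinate in $(-t,1-t)$. This is an admissible competitor in the definition of $F(M,t)$ whose $\vec v$-norm equals $|\vec v|_\infty$, so $F(M,t)\le|\vec v|_\infty$, as required. The only genuine content is the covering claim, which is an elementary interval computation; the point to get right is that the window has length exactly one and contains $0$, which is precisely what forces $u$ and $-u$ to be covered over complementary ranges of $t$.
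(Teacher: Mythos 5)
Your proposal is correct and follows essentially the same argument as the paper: both use the lattice symmetry $(-u,-\vec v)\in\Z^{d+1}M$, reduce to the case $0\le u<1/2$, and observe that the two ranges of $t$ for which $u$ and $-u$ respectively lie in the window $(-t,1-t)$, namely $(0,1-u)$ and $(u,1)$, cover all of $(0,1)$ precisely because $u<1/2$.
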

\begin{proof}
If $u\in[0,1/2)$ then for any $0<t<1-u$, we have that $F(M,t)\le|\vec{v}|_\infty$. Noting that $(-u,-\vec v)\in\Z^{d+1}M$, we see that this inequality also holds for any $t$ satisfying $u<t<1$. Since $u<1/2$, we conclude that $F(M,t)\le |\vec{v}|_\infty$ for all $0<t<1$. The case when $u\in(-1/2,0]$ follows from the same argument.
\end{proof}
Next we use geometric information to place restrictions on the vectors $(u_i,\vec v_i)$. This is where we will use the fact that we are working with the maximum norm.
\begin{prop}\label{prop.NoSameOrth}
For $1\le i<j\le K$, if $\sgn(u_i)\vec v_i$ and $\sgn(u_j)\vec v_j$ lie in the same orthant of $\R^d$, then $j=K$.
\end{prop}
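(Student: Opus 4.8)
The plan is to argue by contradiction, using Proposition \ref{prop.SmallT}, the additive structure of the lattice $\ZZ^{d+1}M$, and the particular geometry of the maximum norm. I would begin with a reduction that pins down the first coordinates: \emph{if $i<K$ then $|u_i|\ge 1/2$}. Indeed, were $|u_i|<1/2$, Proposition \ref{prop.SmallT} would give $F(M,t)\le|\vec v_i|_\infty$ for every $t\in(0,1)$; but the third defining property of our vectors supplies a $t$ with $F(M,t)=|\vec v_K|_\infty$, so $|\vec v_K|_\infty\le|\vec v_i|_\infty$, which forces $i=K$ by the first property. This is essentially the only place the hypotheses on the $u_i$ are used, and it isolates the role of the cut-off $1/2$.

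Now suppose, for contradiction, that $\sgn(u_i)\vec v_i$ and $\sgn(u_j)\vec v_j$ lie in a common orthant but $j\ne K$, so that $i<j<K$. By the reduction, $|u_i|,|u_j|\ge 1/2$, and in particular both $u_i,u_j$ are nonzero. I would then pass to the sign-normalized lattice points $w_i:=\sgn(u_i)(u_i,\vec v_i)=(|u_i|,\sgn(u_i)\vec v_i)$ and $w_j:=\sgn(u_j)(u_j,\vec v_j)$, which lie in $\ZZ^{d+1}M$, have positive first coordinates $|u_i|,|u_j|\in[1/2,1)$, and whose vector parts are exactly the two orthant representatives. Their difference $w_j-w_i\in\ZZ^{d+1}M$ then has first coordinate of absolute value strictly less than $1/2$.

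The crux — and the one step that genuinely needs the maximum norm — is to bound the vector part $\sgn(u_j)\vec v_j-\sgn(u_i)\vec v_i$ of $w_j-w_i$. Since the two orthant representatives share an orthant, they have the same sign in each coordinate (with zeros allowed), and for two same-sign reals $a,b$ one has $|a-b|\le\max(|a|,|b|)$; applying this coordinatewise and taking the maximum gives $|\sgn(u_j)\vec v_j-\sgn(u_i)\vec v_i|_\infty\le\max(|\vec v_i|_\infty,|\vec v_j|_\infty)=|\vec v_j|_\infty$, the last equality being the ordering $|\vec v_i|_\infty<|\vec v_j|_\infty$. This vector is nonzero, again because $|\vec v_i|_\infty\ne|\vec v_j|_\infty$, so Proposition \ref{prop.SmallT} genuinely applies to $w_j-w_i$. (For a general norm such a coordinatewise bound need not hold; it is special to the maximum norm.)

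Combining the pieces finishes the argument: since $w_j-w_i$ has first coordinate of absolute value below $1/2$, Proposition \ref{prop.SmallT} yields $F(M,t)\le|\sgn(u_j)\vec v_j-\sgn(u_i)\vec v_i|_\infty\le|\vec v_j|_\infty<|\vec v_K|_\infty$ for every $t\in(0,1)$, where the strict inequality uses $j<K$. This contradicts the third defining property, which provides a $t$ with $F(M,t)=|\vec v_K|_\infty$. Hence $j=K$, as claimed. I expect the orthant estimate of the third paragraph to be the main obstacle: it is where the maximum norm is essential, and one must also confirm that passing from $(u_i,\vec v_i)$ to $\sgn(u_i)(u_i,\vec v_i)$ leaves the orthant representative unchanged — which it does, since $\sgn(u)\vec v$ is invariant under $(u,\vec v)\mapsto(-u,-\vec v)$, so the sign-normalization is compatible with the hypothesis.
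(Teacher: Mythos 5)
Your proof is correct and follows essentially the same route as the paper's: both rest on Proposition \ref{prop.SmallT}, applied first to force $|u_i|,|u_j|\ge 1/2$, and then to a lattice point (your $w_j-w_i$) whose first coordinate has absolute value below $1/2$ and whose vector part is nonzero and bounded by $|\vec v_j|_\infty$ via the same-orthant estimate for the maximum norm. The only cosmetic difference is that your sign-normalization $w_i=\sgn(u_i)(u_i,\vec v_i)$ handles in one stroke the two cases that the paper treats separately, namely $\sgn(u_i)=\sgn(u_j)$ via the difference $\vec v_j-\vec v_i$ and $\sgn(u_i)=-\sgn(u_j)$ via the sum $\vec v_j+\vec v_i$.
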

\begin{proof}
If $|u_j|<1/2$ then by the previous proposition we have that $F(M,t)\le|\vec v_j|_\infty$ for all $0<t<1$, which implies that $j=K$. Therefore suppose that $|u_j|\ge 1/2$. Then, by Proposition \ref{prop.SmallT} again, this forces $|u_i|\ge 1/2$.

If $\sgn(u_i)=\sgn(u_j)$ and if $\vec v_i$ and $\vec v_j$ lie in the same orthant, then $|u_i-u_j|<1/2$, and
\begin{equation*}
0<|\vec v_i-\vec v_j|_\infty\le \max\left\{|\vec v_i|_\infty,|\vec v_j|_\infty\right\}=|\vec v_j|_\infty.
\end{equation*}
Since $(u_i-u_j,\vec v_i-\vec v_j)\in\Z^{d+1}M$, it follows from Proposition \ref{prop.SmallT} that
\begin{equation}\label{eqn.SameOrthBd}
F(M,t)\le |\vec v_i-\vec v_j|_\infty\le |\vec v_j|_\infty
\end{equation}
for all $0<t<1$. Therefore we conclude that $j=K.$

Similarly, if $\sgn(u_i)=-\sgn(u_j)$ and if $\vec v_i$ and $-\vec v_j$ lie in the same orthant, then
$|u_i+u_j|<1/2$, and
\begin{equation*}
	0<|\vec v_i+\vec v_j|_\infty\le \max\left\{|\vec v_i|_\infty,|\vec v_j|_\infty\right\}=|\vec v_j|_\infty.
\end{equation*}
Since $(u_i+u_j,\vec v_i+\vec v_j)\in\Z^{d+1}M$, this again implies that \eqref{eqn.SameOrthBd} holds, and we conclude that $j=K$.
\end{proof}
By Proposition \ref{prop.NoSameOrth}, each of the vectors $\sgn(u_i)\vec v_i$, for $1\le i\le K-1$, must lie in a different orthant of $\R^d$. This immediately gives the bound in \eqref{eqn.G(M)Bd}, and therefore completes the proof of the first part of Theorem \ref{thm.GapsBd1}.

Finally, we give examples with $d=2$ and 3 for which the bound in Theorem \ref{thm.GapsBd1} is attained. In what follows, for $\vec x\in\R^d$ we write
\begin{equation*}
	\|\vec x\|=\min\{|\vec x-\vec \ell|_\infty : \ell\in\scrL\}.
\end{equation*}

For $d=2$ take $\scrL=\Z^2,~\vec\alpha=(157/500,-23/200),$ and $N=11$. Then we have that
\begin{align*}
\delta^*_{1,N}=\|10\vec\alpha\|=\frac{3}{20},\quad\delta^*_{2,N}=\|9\vec\alpha\|=\frac{87}{500},\quad\delta^*_{4,N}=\|7\vec\alpha\|=\frac{99}{500},
\end{align*}
\begin{align*}
	\delta^*_{5,N}=\|6\vec\alpha\|=\frac{31}{100},\quad\text{and}\quad\delta^*_{6,N}=\|\vec\alpha\|=\frac{157}{500},
\end{align*}
therefore $g^*_N(\alpha,\scrL)=5$.

For $d=3$ take $\scrL=\Z^3,~\vec\alpha=(-157/10000, -742/3125, -23/400),$ and $N=73$. Then we have that
\begin{align*}
\delta^*_{1,N}=\|72\vec\alpha\|=\frac{7}{50},\quad\delta^*_{2,N}=\|71\vec\alpha\|=\frac{443}{3125},\quad\delta^*_{5,N}=\|68\vec\alpha\|=\frac{456}{3125},
\end{align*}
\begin{align*}
\delta^*_{6,N}=\|67\vec\alpha\|=\frac{59}{400},\quad\delta^*_{18,N}=\|55\vec\alpha\|=\frac{13}{80},\quad \delta^*_{19,N}=\|54\vec\alpha\|=\frac{557}{3125},
\end{align*}
\begin{align*}
\delta^*_{22,N}=\|51\vec\alpha\|=\frac{1993}{10000},\quad\delta^*_{23,N}=\|50\vec\alpha\|=\frac{43}{200},\quad\text{and}\quad\delta^*_{24,N}=\|4\vec\alpha\|=\frac{23}{100},
\end{align*}
therefore $g^*_N(\alpha,\scrL)=9$.

This completes the proof of our main result. We conclude with a couple of remarks. First of all, we note that the proof that we have given here (in particular, establishing the optimal bounds in dimensions 2 and 3) is simpler than the proof of the corresponding result for the Euclidean metric given in \cite{HaynMark2020b}. For the proof of the optimal bounds in dimensions 2 and 3 here, we only needed Proposition \ref{prop.NoSameOrth}, which plays a similar role for the maximum metric as \cite[Proposition 2]{HaynMark2020b} does for the Euclidean metric. However, for the Euclidean metric, \cite[Proposition 2]{HaynMark2020b} is not enough to establish the optimal bound in dimension 2, which is the reason for the additional geometric arguments given in \cite[Section 5]{HaynMark2020b}. Of course, a less precise explanation for this is that cube packing is easier and more efficient than sphere packing.

Finally, for $d\ge 4,$ it seems likely that the upper bound of Theorem \ref{thm.GapsBd1} is too large. In fact, for $d=4$, we have not found any examples so far with $g_N^*>9$. Establishing optimal upper bounds in these cases is an interesting open problem.

\vspace{.15in}

{\footnotesize
\noindent
AH, JR: Department of Mathematics, University of Houston,\\
Houston, TX, United States.\\
haynes@math.uh.edu, juan.ramirez789456@gmail.com

}

\end{document}